\newtheorem{theorem}{Theorem}
\newtheorem{lemma}{Lemma}
\theoremstyle{definition}
\newtheorem{remark}{Remark}
\newcommand{\eps}{\varepsilon}
\newcommand{\D}{\mathbb{D}}
\renewcommand{\H}{\mathbb{H}}
\newcommand{\C}{\mathbb{C}}
\newcommand{\R}{\mathbb{R}}
\newcommand{\qi}{\mathbf{i}}
\newcommand{\qj}{\mathbf{j}}
\newcommand{\qk}{\mathbf{k}}
\newcommand{\ci}{\mathrm{i}}
\newcommand{\cj}[1]{\overline{#1}}
\newcommand{\Norm}[1]{\Vert{#1}\Vert}
\newcommand{\quadric}[1]{\mathcal{#1}}
\newcommand{\QQ}{\quadric{Q}}
\newcommand{\SQ}{\quadric{S}}
\newcommand{\EG}{E}
\newcommand{\NC}{\quadric{N}}
\newcommand{\SE}[1][3]{\mathrm{SE}(#1)}
\newcommand{\qf}{\omega}
\title{The Kinematic Image of 2R Dyads\\and Exact Synthesis of 5R Linkages}
\author{Tudor-Dan Rad and Hans-Peter Schröcker\\
  Unit Geometry and CAD\\
  University of Innsbruck, Austria}
\begin{document}

\maketitle

\begin{abstract}
    We characterise the kinematic image of the constraint variety of a
  2R dyad as a regular ruled quadric in a three-space that contains a
  ``null quadrilateral''. Three prescribed poses determine, in
  general, two such quadrics. This allows us to modify a recent
  algorithm for the synthesis of 6R linkages in such a way that two
  consecutive revolute axes coincide, thus producing a 5R
  linkage. Using the classical geometry of twisted cubics on a
  quadric, we explain some of the peculiar properties of the the
  resulting synthesis procedure for 5R linkages.
 \end{abstract}

\par\noindent
\emph{Keywords:} 2R dyad, kinematic map, 5R linkage, Goldberg linkage, linkage synthesis, dual quaternions
\par\smallskip\noindent
\emph{MSC 2010:}
70B15 (primary); 70B10, 12D05, 51N15  
\section{Introduction}
\label{sec:introduction}

Given three poses of a rigid body, there exists, in general, a unique
closed kinematic loop of four revolute joints with one degree of
freedom, a so-called Bennett or 4R linkage, one of whose links visits
the three poses. \cite{suh69} solved this synthesis problem by
constructing two 2R dyads to the three given poses and proving that
they can be combined to form a Bennett linkage. The three pose
synthesis problem for 4R linkages was re-visited by
\cite{brunnthaler05:_bennet_synthesis} in a dual quaternion setting. A
generalisation of their approach led to a factorisation theory for
``motion polynomials'' \citep{hegedus13:_factorization2}. It was used
by \cite{hegedus14:_four_pose_synthesis} to synthesise spatial 6R
linkages -- closed kinematic chains with six revolute joints -- to
four prescribed poses in general position. More precisely, the four
poses determine two families of (not necessary real) rational coupler
motions, each giving rise to nine families of overconstrained 6R
linkages. It should also be mentioned that the construction is only
capable of producing a special type of 6R linkages.

In this paper we intend to close the gap in this sequence of exact
factorisation based synthesis procedures for closed loop linkages and
present a construction of 5R linkages (Goldberg linkages) to ``more
than three but less than four poses''. Our synthesis algorithm is
based on a characterisation of the image of open 2R chains under
Study's kinematic mapping and the observation that the coupler motion
of a 5R linkage is also parameterised by a cubic motion polynomial
\citep[see][]{hegedus13:_bonds2}. This allows to treat 5R
synthesis as a special case of above mentioned synthesis of 6R
linkages.

We continue this text with a quick introduction to dual quaternions
and kinematics in \autoref{sec:preliminaries}. In
\autoref{sec:characterisation} we provide a geometric characterisation
for the kinematic image of a 2R dyad. While necessary properties can
be derived easily, the proof of their sufficiency requires more work
and comprises the main part of this text. The synthesis of 5R linkage
in \autoref{sec:synthesis} is then merely a corollary to previous
results. Note however, that it is not obvious how to use the additional
degrees of freedom in exact 5R synthesis.

\section{Preliminaries}
\label{sec:preliminaries}

This article's scene is the projectivised dual quaternion model of spatial
kinematics. Here, we give a very brief introduction to this model for
the purpose of settling our notation. More details will be introduced in the
text as needed. For more thorough introductions to dual quaternions
and there relations to kinematics we refer to
\cite[Section~3]{klawitter15} or
\cite[Section~11]{selig05:_geometric_fundamentals_robotics}. Familiarity
with \cite{hegedus13:_factorization2} is recommended too.

The dual quaternions, denoted by $\D\H$, form an associative algebra
in $\R^8$ where multiplication of the base elements $1$, $\qi$, $\qj$,
$\qk$, $\eps$, $\eps\qi$, $\eps\qk$, and $\eps\qk$ is defined by the
rules
\begin{equation*}
  \qi^2 = \qj^2 = \qk^2 = \qi\qj\qk = -1,\quad
  \eps^2 = 0,\quad
  \qi\eps = \eps\qi,\quad
  \qj\eps = \eps\qj,\quad
  \qk\eps = \eps\qk.
\end{equation*}
An element $q \in \D\H$ may be written as $q = p + \eps d$ with
quaternions $p,d \in \H \coloneqq \langle 1, \qi, \qj, \qk \rangle$
(angled brackets denote linear span).  In this case the
\emph{quaternions} $p$ and $d$ are referred to as \emph{primal} and
\emph{dual} part of $q$, respectively. The conjugate dual quaternion
is $\cj{q} = \cj{p} + \eps\cj{d}$ and conjugation of quaternions is
done by multiplying the coefficients of $\qi$, $\qj$, and $\qk$ with
$-1$. It satisfies the rule $\cj{qr} = \cj{r}\,\cj{q}$ for any
$q,r \in \D\H$. The dual quaternion norm is defined as
$\Norm{q} = q\cj{q}$. We readily verify that it is a \emph{dual
  number,} that is, an element of
$\D \coloneqq \langle 1,\eps \rangle$.

We identify linearly dependent non-zero dual quaternions and thus
arrive at the projective space $P^7 = P(\R^8)$. Writing $[q]$ for the
point in $P^7$ that is represented by $q \in \D\H$, the \emph{Study
  quadric} is defined as
$\SQ \coloneqq \{[q] \in P^7\colon \Norm{q} \in \R \}$.  With
$q = p + \eps d$, the algebraic condition for $[q] \in \SQ$ is
$p\cj{d} + d\cj{p} = 0$.

Identifying $P^3$ with the projective subspace generated by
$\langle 1, \eps\qi, \eps\qj, \eps\qk \rangle$, a point
$[q] = [p + \eps d] \in \SQ$ with non-zero primal part acts on
$[x] \in P^3$ via
\begin{equation}
  \label{eq:1}
  [x] \mapsto [(p + \eps d) x (\cj{p} - \eps \cj{d})].
\end{equation}
The map \eqref{eq:1} is the projective extension of a rigid body
displacement in $\R^3$. Composition of displacements corresponds to
dual quaternion multiplication. This isomorphism between $\SE$, the
group of rigid body displacements, and the projectivisation of the
group of dual quaternions of real norm and non-zero primal part
provides a rich and solid algebraic and geometric basis for
investigations in kinematics.

\section{A characterisation of 2R spaces}
\label{sec:characterisation}

We complement the Study quadric by the quadric
$\NC = \{ [q] \colon \Norm{q} \in \eps\R \}$ of points represented by
dual quaternions whose norm has vanishing primal part. It is a quadric
of rank four. The set $\EG = \{[q] \colon q \in \eps\H\}$ of its
singular points is a projective space of dimension three and it is
contained in the Study quadric. Only the points of $\EG$ are real
points of $\NC$ whence we are led to consider the complex extension
$P(\C^8)$ of $P^7$. We call $\NC$ the \emph{null cone} and refer to
the set $\EG$ of its singular points as the \emph{exceptional three
  space.}

Given two non coplanar lines $\ell_1$, $\ell_2$ in Euclidean three
space, we consider the set of all displacements obtained as
composition of a rotation around $\ell_2$, followed by a rotation
about $\ell_1$. Its kinematic image is known to lie in a three space
\cite[Section~11.4]{selig05:_geometric_fundamentals_robotics} which we
call a \emph{2R space}. By construction, a 2R space contains the point
$[1]$ (the identity displacement) and intersects $\SQ$ in a doubly
ruled regular quadric. The rulings are obtained by varying one
revolute angle and fixing the other. In order to characterise 2R
spaces among all three-dimensional subspaces of $P^7$ with these
properties, we introduce the notion of \emph{null quadrilaterals.}
These are spatial quadrilaterals contained in both, $\SQ$ and
$\NC$. We call their edges \emph{null lines.}

\begin{theorem}
  \label{th:1}
  A three space $P \subset P^7$ with $[1] \in P$ is a 2R space if and
  only if it
  \begin{itemize}
  \item intersects the Study quadric in a regular ruled quadric~$\QQ$,
  \item does not intersect the exceptional three space $\EG$, and
  \item contains a null quadrilateral.
  \end{itemize}
\end{theorem}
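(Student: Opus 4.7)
The plan is to handle the two directions separately, with almost all the work in the ``if'' direction. For the easy ``only if'' direction I would compute directly: each rotation pencil $\{[1+t\ell_i]:t\in P^1\}$ is a line through $[1]$ in $\SQ$, so the 2R product $h_1h_2$ lies in the (projectivised) three-space $\langle 1,\ell_1,\ell_2,\ell_1\ell_2\rangle$, which contains $[1]$. The map $(h_1,h_2)\mapsto h_1h_2$ is the Segre embedding $P^1\times P^1\hookrightarrow P^3$, so its image is a regular ruled quadric $\QQ\subset\SQ$. Non-intersection with $\EG$ follows because the primal part of $h_1h_2$ is the product of the nonzero primal parts of the $h_i$. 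Over $\C$, each axis line $[1+t\ell_i]$ contains the two null rotations $1\pm\ci\ell_i$; their four products $(1\pm\ci\ell_1)(1\pm\ci\ell_2)$ lie in $\SQ\cap\NC$, and the four rulings of $\QQ$ through them, two from each ruling family, form a null quadrilateral.

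For the ``if'' direction, assume $P$ satisfies the three conditions and let $L_1,L_2\subset\QQ$ be the two lines through $[1]$, one from each ruling family. The plan is to recover real Plücker vectors $\ell_1,\ell_2$ from $L_1,L_2$ and then verify $P=\langle 1,\ell_1,\ell_2,\ell_1\ell_2\rangle$. Under the Segre identification $\QQ\cong P^1\times P^1$, the curve $\QQ\cap\NC$ is cut out by a form of bidegree $(2,2)$, so if it decomposes into rulings at all, it consists of exactly two lines in each ruling family. The null quadrilateral hypothesis provides such a decomposition, which is then the unique one in $P$. Since $P$ is real, the set of null lines is conjugation-invariant; since $P\cap\EG=\emptyset$, no null line has any real point; hence the two null lines in each family are complex conjugate.

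I would then intersect $L_1$ with the two null lines of the opposite ruling family to obtain a complex conjugate pair $Z,\cj{Z}\in L_1\cap\NC$, and define $[\ell_1]$ as the harmonic conjugate of $[1]$ with respect to $\{Z,\cj{Z}\}$ on $L_1$; it is real because $[1]$ is and $\{Z,\cj{Z}\}$ is conjugation-invariant. A direct computation of the null condition $\Norm{1+\ci\ell_1}=0$, writing $\ell_1=p+\eps d$ and using $L_1\subset\SQ\setminus\EG$, forces $p,d\in\langle\qi,\qj,\qk\rangle$, $p\cj{p}=1$, and $p\cj{d}+d\cj{p}=0$ -- the normalised Plücker conditions for a real line in $\R^3$. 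The analogous construction on $L_2$ yields $\ell_2$. Finally, a null quadrilateral vertex has the product form $(1+\ci\ell_1)(1+\ci\ell_2)=1+\ci(\ell_1+\ell_2)-\ell_1\ell_2$, and analogously for the other three; since the vertex lies in $P$, so does $\ell_1\ell_2$, and a dimension count yields $P=\langle 1,\ell_1,\ell_2,\ell_1\ell_2\rangle$.

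The main obstacle is justifying that the null quadrilateral vertices really have this product structure: the ruling-1 null line through $[1+\ci\ell_2]\in L_2$ must be shown to coincide with $\{[h(1+\ci\ell_2)]:h\in L_1\}$, which is not automatic without knowing $P$ coincides with the 2R space in advance. All three hypotheses conspire most delicately here; once this is settled, identification of $\ell_i$ with a real Plücker vector is a short computation.
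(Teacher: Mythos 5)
Your necessity argument is essentially the paper's (\autoref{lem:2}): parameterise the dyad, observe the image spans $\langle 1,h_1,h_2,h_1h_2\rangle$ as a regular ruled quadric, and exhibit the null quadrilateral as the four rulings $t_i=\pm\ci$. That half is fine. The sufficiency half, however, contains a genuine gap, and it is exactly the one you flag yourself at the end: you construct real candidates $\ell_1,\ell_2$ on the two rulings $L_1,L_2$ through $[1]$ (your harmonic-conjugate construction and the Plücker-condition computation are correct and unproblematic), but you never prove that the given null quadrilateral's vertices are the products $[(1\pm\ci\ell_1)(1\pm\ci\ell_2)]$ (or the reversed products), which is what you need to conclude $\ell_1\ell_2\in P$. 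Declaring this ``the main obstacle'' where ``all three hypotheses conspire most delicately'' is an accurate diagnosis, not a proof; without it you have only shown that $P$ contains $[1]$, $[\ell_1]$, $[\ell_2]$, which does not determine $P$ among the pencil of three-spaces through that plane.

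For comparison, this step is where the paper spends almost all of its effort. In \autoref{lem:3} it forms the two candidate products and verifies via \autoref{lem:1} that each spawns null lines through the relevant base points; it then uses the hypothesis $P\cap\EG=\varnothing$ to project everything regularly onto the primal part, where the fact that a point of a regular ruled quadric lies on exactly two rulings forces the primal part of the vertex $[v_1]$ to agree with that of $[m_1n_1]$ or $[n_1m_1]$; and finally it lifts this identity from the primal part back to $P^7$ by a uniqueness statement for spatial quadrilaterals on a quadric with prescribed primal projection and prescribed incidences (\autoref{lem:5}), which in turn rests on the closure property of the composition of four perspectivities (\autoref{lem:4}). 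Some argument of this kind -- or a substitute for it -- is indispensable: the product structure of the vertices is not a formal consequence of the three hypotheses but the actual content of the theorem. As it stands, your proposal is a correct reduction of the theorem to its hardest step, not a proof.
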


The first and second item in \autoref{th:1} exclude exceptional cases
with coplanar, complex, or ``infinite'' revolute axes, the latter
corresponding to prismatic joints. The important point is existence of
a null quadrilateral. We split the proof of \autoref{th:1}
into a series of lemmas. It will be finished by the end of this
section.

\begin{lemma}
  \label{lem:1}
  The straight line $[x] \vee [y]$ is contained in $\SQ \cap \NC$ if
  and only if $x\cj{x} = y\cj{y} = x\cj{y} + y\cj{x} = 0$.
\end{lemma}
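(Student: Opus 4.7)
The plan is to reduce the claim to a polynomial identity in two scalar parameters. First I observe that the defining conditions of $\SQ$ and $\NC$ can be combined: since $\SQ = \{[q]\colon \Norm{q}\in\R\}$ and $\NC = \{[q]\colon \Norm{q}\in\eps\R\}$, a point lies in $\SQ\cap\NC$ precisely when its norm vanishes, i.e.\ when $q\cj{q}=0$. So the line $[x]\vee[y]$ lies in $\SQ\cap\NC$ if and only if $(\lambda x+\mu y)\overline{(\lambda x+\mu y)}=0$ for every $(\lambda,\mu)\in\C^2\setminus\{0\}$ (we must work over the complex extension, as already noted in the paper, but since $\lambda,\mu$ are central scalars commuting with conjugation they pose no issue).

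Next I would expand bilinearly, using that conjugation is $\C$-linear on dual quaternions and satisfies $\cj{qr}=\cj{r}\cj{q}$:
\begin{equation*}
  (\lambda x+\mu y)\overline{(\lambda x+\mu y)}
  = \lambda^{2}\,x\cj{x} + \lambda\mu\,(x\cj{y}+y\cj{x}) + \mu^{2}\,y\cj{y}.
\end{equation*}
This is a homogeneous quadratic polynomial in $(\lambda,\mu)$ whose coefficients lie in $\D\H$. Requiring it to vanish for all $(\lambda,\mu)$ is equivalent to the simultaneous vanishing of its three coefficients, which is precisely the claimed condition $x\cj{x}=y\cj{y}=x\cj{y}+y\cj{x}=0$. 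The necessity follows by specialising to $(1,0)$, $(0,1)$, and $(1,1)$; the sufficiency is immediate from the expansion.

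There is no real obstacle here: the only point that requires a moment's care is that $\SQ\cap\NC$ coincides with the zero-norm locus (so that both quadric conditions collapse into a single dual-quaternion equation $q\cj{q}=0$), and that scalar coefficients $\lambda,\mu$ pass through conjugation unchanged, allowing the straightforward bilinear expansion above.
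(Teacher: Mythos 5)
Your proof is correct and is exactly the ``straightforward computational proof'' that the paper omits: identifying $\SQ\cap\NC$ with the zero-norm locus, expanding $(\lambda x+\mu y)\overline{(\lambda x+\mu y)}$ bilinearly, and reading off the three coefficients. No gaps; the observation that the scalars are central and commute with conjugation is the only point needing care, and you address it.
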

We omit the straightforward computational proof but note that the
left-hand sides of each of the three conditions in \autoref{lem:1} are
dual numbers. Hence, they give \emph{just six} independent conditions on
the real coefficients of $x$ and $y$.

\begin{lemma}
  \label{lem:2}
  The conditions of \autoref{th:1} are necessary for 2R spaces.
\end{lemma}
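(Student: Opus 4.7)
My plan is to take the standard dual quaternion parametrization of a 2R chain and to read each of the three conditions off from one norm identity. Write $h_1, h_2 \in \D\H$ for normalized dual quaternion representatives of the axes $\ell_1, \ell_2$, so that $\cj{h_i} = -h_i$, $h_i\cj{h_i} = 1$, and $h_i^2 = -1$. Any composition of a rotation about $\ell_2$ followed by a rotation about $\ell_1$ is then represented projectively by
\begin{equation*}
  q(t_1,t_2) = (t_1-h_1)(t_2-h_2) = t_1 t_2 - t_1 h_2 - t_2 h_1 + h_1 h_2,
\end{equation*}
with $t_1, t_2 \in P^1$. In particular $P = \langle 1, h_1, h_2, h_1 h_2 \rangle$, and the kinematic image of the dyad is the bilinear, and hence Segre, embedding of $P^1 \times P^1$ into $P$.

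The heart of the argument is the identity $q\cj{q} = (t_1^2+1)(t_2^2+1)$, which follows from $\cj{q} = (t_2+h_2)(t_1+h_1)$ together with $(t_i-h_i)(t_i+h_i) = t_i^2 - h_i^2 = t_i^2+1$. Because this norm is a real number, the whole Segre surface lies on $\SQ$, and a dimension count shows that it already exhausts the intersection $\QQ = P \cap \SQ$; as a Segre image of $P^1 \times P^1$ this $\QQ$ is a regular doubly ruled quadric, settling the first condition. For the second, a relation $a + b h_1 + c h_2 + d\, h_1 h_2 \in \eps\H$ would force the direction quaternions $p_1, p_2$ of $\ell_1, \ell_2$ to satisfy $a + b p_1 + c p_2 + d\, p_1 p_2 = 0$ in $\H$. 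Non-coplanarity of the axes makes $p_1, p_2$ linearly independent pure unit quaternions, so $\{1, p_1, p_2, p_1 p_2\}$ is a basis of $\H$; hence $a = b = c = d = 0$ and $P \cap \EG = \emptyset$.

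For the third condition, the factored norm vanishes exactly on the four rulings $L_1^{\pm}\colon t_1 = \pm\ci$ and $L_2^{\pm}\colon t_2 = \pm\ci$, each of which lies on $\SQ \cap \NC$ and is therefore a null line by \autoref{lem:1}. Two lines from the same ruling family are disjoint, while lines from opposite families meet in a single point $q(\pm\ci, \pm\ci)$, so the four lines close up to a spatial quadrilateral inscribed in $\QQ$: the required null quadrilateral. The only place where care is needed is the translation of ``non-coplanar'' into the linear independence used both in the Segre argument and in the basis argument for $\EG$; once $q\cj{q}$ has been reduced to $(t_1^2+1)(t_2^2+1)$, the rest is bookkeeping, so I do not expect any serious obstacle on the necessity side.
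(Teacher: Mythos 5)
Your proposal is correct and follows essentially the same route as the paper: the parametrisation $(t_1-h_1)(t_2-h_2)$, the span $\langle 1,h_1,h_2,h_1h_2\rangle$ as the 2R space with $\QQ$ the resulting regular ruled quadric, and the four rulings $t_1=\pm\ci$, $t_2=\pm\ci$ as the null quadrilateral. Your single norm identity $q\cj{q}=(t_1^2+1)(t_2^2+1)$ packages the paper's case-by-case verification of the null lines a little more compactly, and your basis argument $\{1,p_1,p_2,p_1p_2\}$ for $P\cap\EG=\varnothing$ is a slightly more explicit version of the paper's linear-dependence argument, but these are presentational differences only.
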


\begin{proof}
  The constraint variety of a 2R chain can be parameterised as
  \begin{equation}
    \label{eq:2}
    R(t_1,t_2) = (t_1 - h_1)(t_2-h_2)
  \end{equation}
  with two dual quaternions $h_1, h_2$ that satisfy
  $h_1\cj{h_1} = h_2\cj{h_2} = 1$, $h_1+\cj{h_1} = h_2+\cj{h_2} = 0$
  and $h_1h_2 \neq h_2h_1$. The first condition ensures that the dual
  quaternions are suitably normalised and $[h_1], [h_2] \in \SQ$.  The
  second condition means that $h_1$ and $h_2$ describe half-turns
  (rotations through an angle of $\pi$). The third condition ensures
  that the axes of these half turns are
  different. Equation~\eqref{eq:2} describes a composition of two
  rotations for all values of $t_1$, $t_2$ in $\R \cup \{\infty\}$
  with $\infty$ corresponding to zero rotation angle. Even if their
  kinematic meaning is unclear, we also allow complex parameter
  values. Expanding \eqref{eq:2} yields
  \begin{equation*}
    R(t_1,t_2) = t_1t_2 - t_1h_2 - t_2h_1 + h_1h_2.
  \end{equation*}
  We see that the kinematic image of the 2R dyad lies in the three
  space spanned by $[1]$, $[h_1]$, $[h_2]$, $[h_1h_2]$. In a suitable
  projective reference with these points as base points, the surface
  parameterised by \eqref{eq:2} is the quadric with equation
  $x_0x_3 - x_1x_2 = 0$ which is indeed regular and ruled.

  The intersection of $P$ with the exceptional three space $\EG$ is
  non empty if and only if the primal part of $R(t_1,t_2)$ vanishes
  for certain parameter values $t_1$, $t_2$. This can only happen if
  the primal parts of $h_1$ and $h_2$ are linearly dependent over $\R$
  but then the revolute axes are parallel and $P$, contrary to our
  assumption, is contained in $\SQ$. The other possibility for
  $P \subset \SQ$, intersecting revolute axes, has been excluded as
  well. Clearly, $P$ is not contained in the null cone $\NC$ either.
  We claim that the intersection of $P$ and $\NC$ consists of the four
  lines given by $t_1 = \pm\ci$, $t_2 = \pm\ci$.  Indeed, they are
  null lines. Take for example $x = (\ci - h_1)(t_2-h_2)$ and, in view
  of \autoref{lem:1}, compute
  \begin{equation*}
    \begin{aligned}
      x\cj{x} &= (\ci-h_1)(t_2-h_2)(t_2-\cj{h_2})(\ci-\cj{h_1}) = (t_2-h_2)(t_2-\cj{h_2})(\ci-h_1)(\ci-\cj{h_1}) \\
              &= (t_2-h_2)(t_2-\cj{h_2})(-\ci^2-\ci(h_1+\cj{h_1})+h_1\cj{h_1}) = (t_2-h_2)(t_2-\cj{h_2})(-1-0+1) = 0.
    \end{aligned}
  \end{equation*}
  This derivation uses the fact that $(t_2-h_2)(t_2-\cj{h_2})$ is a
  real number and thus commutes with all other factors. The cases
  $t_1=-\ci$, $t_2=\pm\ci$ are similar so that we have verified all
  conditions of \autoref{th:1}.
\end{proof}

The proof of sufficiency is more involved. We need two additional
lemmas from projective geometry which are formulated and proved in
\autoref{sec:appendix}.

\begin{lemma}
  \label{lem:3}
  A three space $P$ that satisfies all conditions of \autoref{th:1} is
  a 2R space.
\end{lemma}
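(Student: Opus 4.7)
The plan is to reconstruct an explicit 2R dyad whose kinematic image is $P$, i.e.\ to produce pure unit quaternions $h_1$, $h_2$ with $h_1h_2\neq h_2h_1$ such that $P=\langle [1],[h_1],[h_2],[h_1h_2]\rangle$. The hypotheses already supply the raw material. Since $\Norm{1}=1\in\R$, the identity $[1]$ lies on $\QQ$, so two rulings of $\QQ$ pass through it, one from each ruling family. The four edges of the null quadrilateral are likewise lines of $\QQ$ and so split two and two between the two families.

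First I would locate four auxiliary points: each ruling through $[1]$ meets the two edges of the quadrilateral from the \emph{other} ruling family, giving intersection points $A_+,A_-$ on one ruling and $B_+,B_-$ on the other. Comparing with the parameterisation $R(t_1,t_2)=(t_1-h_1)(t_2-h_2)$ of \autoref{lem:2}, one expects $A_\pm$ to correspond to $(t_1,t_2)=(\infty,\pm\ci)$, i.e.\ to $[\pm\ci-h_2]$, and $B_\pm$ to $[\pm\ci-h_1]$. Since $(\infty,0;\ci,-\ci)=-1$, the candidate $[h_2]$ can then be \emph{defined} as the harmonic conjugate of $[1]$ with respect to $\{A_+,A_-\}$ and $[h_1]$ analogously from $\{B_+,B_-\}$ --- a projectively invariant recipe.

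The next step is to verify the algebraic conditions $h_i\cj{h_i}=1$ and $h_i+\cj{h_i}=0$ for a suitable choice of representatives. This is where the projective lemmas of the appendix come in: they should be used to lift $A_\pm$, $B_\pm$ to dual quaternions in a coordinated way so that $\tfrac12(A_++A_-)=-h_2$ and $\tfrac{1}{2\ci}(A_+-A_-)=1$ (and analogously on the other ruling). With such representatives fixed, \autoref{lem:1} applied to each edge of the null quadrilateral translates directly into the desired relations on $h_1,h_2$: the vertex conditions $x\cj{x}=0$ deliver $h_i\cj{h_i}=1$, while the mixed conditions $x\cj{y}+y\cj{x}=0$ along the edges collapse to $h_i+\cj{h_i}=0$. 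Non-commutativity $h_1h_2\neq h_2h_1$ follows because $[h_1]$ and $[h_2]$ lie on two distinct rulings of a regular $\QQ$ and so cannot be proportional pure unit quaternions. Finally $P=\langle [1],[h_1],[h_2],[h_1h_2]\rangle$ since both sides are three-spaces and the fourth base point $[h_1h_2]$ is forced into $P$ as an appropriate linear combination of a quadrilateral vertex with the already-constructed $[h_1]$, $[h_2]$.

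The main obstacle I anticipate is precisely this bookkeeping of representatives. The conditions in \autoref{lem:1} are bilinear, so it is not enough to know $A_\pm,B_\pm$ as projective points: they must be scaled consistently across the four edges of the null quadrilateral before the identities on $h_i$ can be read off. This is where the auxiliary projective-geometry lemmas of the appendix should do the crucial work, and it is where the hypothesis $P\cap\EG=\emptyset$ enters, ruling out the degenerate scalings in which $h_i$ would have vanishing primal part.
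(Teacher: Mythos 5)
Your construction of the candidate axes is essentially the paper's: your points $A_\pm$, $B_\pm$ (intersections of the two rulings of $\QQ$ through $[1]$ with the edges of the null quadrilateral from the other family) are exactly the points the paper obtains by cutting the quadrilateral with the tangent hyperplane of $\SQ$ at $[1]$, and your harmonic-conjugate recipe is just one concrete way of ``picking a real rotation quaternion'' on each of the two real lines $A_+\vee A_-$ and $B_+\vee B_-$. The verification that suitably scaled representatives satisfy $h_i\cj{h_i}=1$ and $h_i+\cj{h_i}=0$ is also fine in substance (the two relations $A_+\cj{A_+}=A_-\cj{A_-}=0$ already give both identities once $A_\pm=\pm\ci-h_2$ is arranged), although note that $A_+$ and $A_-$ do not lie on a common null line, so the ``mixed'' condition of \autoref{lem:1} is not available for that pair in the way you describe.

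The genuine gap is your last step. That $P=\langle[1],[h_1],[h_2],[h_1h_2]\rangle$ is \emph{not} forced by dimension counting: $[1]$, $[h_1]$, $[h_2]$ span only a plane inside $P$, and the whole content of the lemma is that the algebra product $h_1h_2$ (or $h_2h_1$ --- these are distinct points, and in general only \emph{one} of them lies in $P$, a dichotomy your proposal never confronts) lands back in $P$. Concretely, one must show that a vertex of the \emph{given} null quadrilateral coincides, as a projective point, with the product of two of the constructed null points, e.g.\ $[u_1]=[(\ci-h_1)(\ci-h_2)]$ or $[(\ci-h_2)(\ci-h_1)]$. The paper proves this in two nontrivial stages: first on the level of primal parts, using that $P\cap\EG=\varnothing$ makes the primal projection a regular projectivity and that a point of the regular ruled quadric $\QQ'$ lies on exactly two rulings; then by lifting to the dual parts via \autoref{lem:5}, which asserts that a spatial quadrilateral on a quadric is uniquely determined by its primal projection together with the planar quadrilateral of points on its edges. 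Your proposal assigns the appendix lemmas only the cosmetic role of normalising representatives; in fact they carry the decisive uniqueness argument, without which the claim that the fourth base point is ``forced into $P$'' is unsupported.
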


\begin{proof}
  Denote the vertices of the null quadrilateral (in that order) by
  $[u_1]$, $[v_1]$, $[u_2]$, and $[v_2]$. Intersecting the Study null
  quadrilateral with the tangent hyperplane of $\SQ$ at $[1]$ yields
  points
  \begin{equation*}
    [m_1] \in [u_1] \vee [v_1],\quad
    [n_1] \in [v_1] \vee [u_2],\quad
    [m_2] \in [u_2] \vee [v_2],\quad
    [n_2] \in [v_2] \vee [u_1]
  \end{equation*}
  of a planar and non-degenerate quadrilateral
  (\autoref{fig:null-lines}). The joins $[m_1] \vee [m_2]$ and
  $[n_1] \vee [n_2]$ are real lines and their real points correspond
  to rotations about fixed axes. Pick real rotation quaternions
  $[h_1] \in [m_1] \vee [m_2]$ and $[h_2] \in [n_1] \vee [n_2]$.  The
  axes of these rotations are the only candidates for our 2R
  dyad. Hence, we have to show that either $[h_1h_2] \in P$ or
  $[h_2h_1] \in P$. It is easy to see that this is the case if and only
  if either $[m_1n_1] \in P$ or $[n_1m_1] \in P$.  In fact, we will
  even show that one of these products equals~$v_1$.

  \begin{figure}
    \centering
    \includegraphics{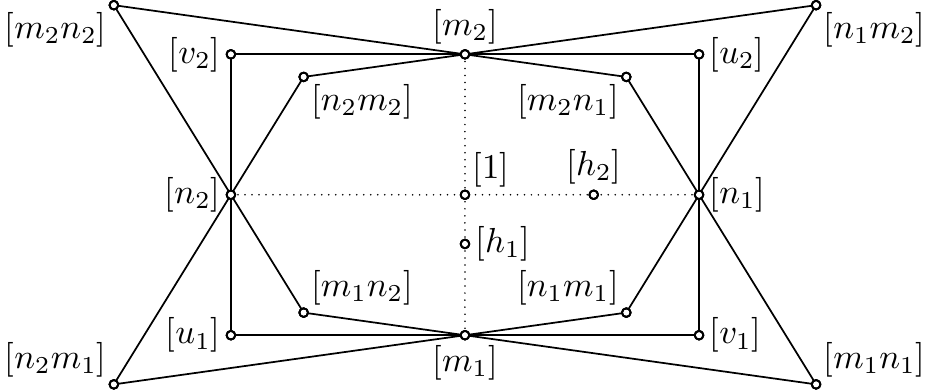}
    \caption{Null lines and null quadrilaterals in the proof of
      \autoref{lem:3}.}
    \label{fig:null-lines}
  \end{figure}

  We claim that both $M_1 \coloneqq [m_1] \vee [m_1n_1]$ and
  $M_2 \coloneqq [m_1] \vee [n_1m_1]$ are null lines. In order to show
  this, we have to verify the conditions of \autoref{lem:1}:
  \begin{gather*}
    (m_1n_1)\cj{(m_1n_1)} = m_1(\underbrace{n_1\cj{n_1}}_{=0})\cj{m_1} = 0,\quad
    (n_1m_1)\cj{(n_1m_1)} = n_1(\underbrace{m_1\cj{m_1}}_{=0})\cj{n_1} = 0,\\
    m_1\cj{(m_1n_1)} + (m_1n_1)\cj{m_1} =
    m_1(\underbrace{n_1 + \cj{n_1}}_{\in \C})\cj{m_1} = (n_1+\cj{n_1})(\underbrace{m_1\cj{m_1}}_{=0}) = 0,\\
    m_1\cj{(n_1m_1)} + (n_1m_1)\cj{m_1} =
    (\underbrace{m_1\cj{m_1}}_{=0})\cj{n_1} + n_1(\underbrace{m_1\cj{m_1}}_{=0}) = 0.
  \end{gather*}
  Similarly, we see that also $N_1 \coloneqq [n_1] \vee [m_1n_1]$ and
  $N_2 \coloneqq [n_1] \vee [n_1m_1]$ are null lines. Thus, we are in
  the situation depicted in \autoref{fig:null-lines} where we have
  three null quadrilaterals with respective vertices
  \begin{equation*}
    [u_1], [v_1], [u_2], [v_2];\quad
    [m_1n_1], [m_2n_1], [m_2n_2], [m_1n_2];\quad
    [n_1m_1], [n_1m_2], [n_2m_2], [n_2m_1].
  \end{equation*}
  The second and third quadrilateral are different because $m_1$ and
  $n_1$ do not commute (otherwise they would lie on the same line
  through $[1]$ which contradicts the regularity of the quadric
  $\QQ \coloneqq P \cap \SQ$). Our proof will be finished as soon as we
  have shown that the first and the second or the first and the third
  quadrilateral are equal. For this, it is sufficient to show that
  $[v_1] = [m_1n_1]$ or $[v_1] = [n_1m_1]$.

  At first, we argue that the primal part of $[v_1]$ equals the primal
  part of $[m_1n_1]$ or of $[n_1m_1]$. Because $P$ does not intersect
  $\EG$, the projection on the primal part is a regular projectivity
  $P \to [\H]$ with centre $\EG$. We denote projected objects by a
  prime, that is, we write $u'_1$, $v'_1$, $m'_1$ etc., for the primal
  parts of $u_1$, $v_1$, $m_1$ etc. The quadric $\QQ$ is regular and
  ruled and so is its primal projection $\QQ'$. Hence, the point
  $[m'_1] \in \QQ'$ is incident with precisely two lines contained in
  $\QQ'$, say $M'_1$ and $M'_2$. But $[m'_1] \vee [v'_1]$ is contained
  in $\QQ'$. Hence $[v'_1] \in M'_1$ or $[v'_1] \in M'_2$.  Similarly,
  $[v'_1]$ is also incident with one of the two lines contained in
  $\QQ'$ and incident with $[n'_1]$. Thus, $[v'_1] = [m'_1n'_1]$ or
  $[v'_1] = [n'_1m'_1]$.

  Now we have to lift this result to the dual part and show that
  $[v_1] = [m_1n_1]$ or $[v_1] = [n_1m_1]$. The alternative being
  similar, we assume $[v'_1] = [m'_1n'_1]$. This means that we have
  two null quadrilaterals, one with vertices $[u_1]$, $[v_1]$,
  $[u_2]$, $[v_2]$ and one with vertices $[m_1n_1]$, $[m_2n_1]$,
  $[m_2n_2]$, $[m_1n_2]$ such that their primal projections are equal
  and corresponding sides intersect in the vertices $[m_1]$, $[n_1]$,
  $[m_2]$, $[n_2]$ of a planar quadrilateral. But then \autoref{lem:5}
  in \autoref{sec:appendix} asserts their equality and
  $[v_1] = [n_1m_1]$ follows.
\end{proof}

\section{Synthesis procedures}
\label{sec:synthesis}

In this section we discuss the exact synthesis of 5R linkages, based
on our geometric characterisation of 2R spaces. The attentive reader
will note that we neither provide a synthesis algorithm nor a concrete
example. The reason for this (and one of the main points of this
paper) is that the synthesis of 5R linkages can be treated as special
case of a recently introduced synthesis procedure for 6R linkages
\citep{hegedus14:_four_pose_synthesis}. In this context, 2R spaces
play a crucial role.

At first, we re-prove a classical result of \cite{suh69} on the
synthesis of 2R dyads to three poses. Its proof is just a variant of
our computations in the proof of \autoref{th:1}.

\begin{theorem}
  \label{th:2}
  Three poses $[p_0] = [1]$, $[p_1]$, $[p_2] \in \SQ$ that span a
  plane that is not tangent to $\SQ$ and does not intersect $\EG$ are
  incident with precisely two 2R spaces.
\end{theorem}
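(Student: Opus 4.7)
The plan is to invoke, in reverse, the parameterisation used in the proof of \autoref{lem:2} and solve the resulting system of equations.

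First, I would observe that the plane $\pi = [1] \vee [p_1] \vee [p_2]$ meets $\SQ$ in a non-degenerate conic $C$ through the three poses (by the non-tangency hypothesis), and, over the complex extension, also meets $\NC$ in a conic $C_N$: $[1] \notin \NC$ forces $\pi \not\subset \NC$, and $\pi \cap \EG = \emptyset$ ensures that the primal projection restricted to $\pi$ is injective, so the defining equation $p\cj{p}=0$ cuts out a genuine conic in $\pi$. By B\'ezout's theorem, $C \cap C_N$ consists of four points in $\pi \cap \SQ \cap \NC$.

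Second, suppose $P$ is a 2R space through $\pi$ with axes $h_1, h_2$. From the proof of \autoref{lem:2}, the null quadrilateral of $P$ consists of four null lines obtained from $R(t_1, t_2) = (t_1 - h_1)(t_2 - h_2)$ by fixing $t_1 = \pm\ci$ or $t_2 = \pm\ci$, each contained in $\SQ \cap \NC$. Every null line of $P$ meets the plane $\pi$ in one point of $\pi \cap \SQ \cap \NC = C \cap C_N$, so the four points of $C \cap C_N$ are shared by every 2R space through $\pi$ and encode the combinatorial data of the null quadrilateral.

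Third, I would translate the conditions $[p_i] \in P = \langle 1, h_1, h_2, h_1h_2\rangle$ via the parameterisation $p_i = \lambda_i (a_i - h_1)(b_i - h_2)$ ($i=1,2$) into an explicit polynomial system in $h_1, h_2, a_i, b_i, \lambda_i$. A parameter count gives $4 + 4 + 4 + 2 = 14$ real unknowns against $2\cdot 7 = 14$ real equations (each $p_i$ is matched up to projective scale, absorbed into $\lambda_i$), so finitely many solutions are expected; the non-degeneracy hypotheses exclude precisely the exceptional cases identified in the proof of \autoref{lem:2}, where the primal parts of $h_1, h_2$ become proportional or the axes intersect.

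The main obstacle, and the source of the factor of two, is to show that the system has exactly two solutions. I expect this to follow from the geometric observation that the unordered pair of axes $\{h_1, h_2\}$ is uniquely determined by the three poses, via the inverse of the diagonal construction of \autoref{lem:3} applied to the planar quadrilateral cut out in the tangent plane to $\QQ$ at $[1]$ by the null quadrilateral through the four points of $C \cap C_N$, whereas the two orderings of the axes give rise to the distinct 2R spaces $\langle 1, h_1, h_2, h_1 h_2\rangle$ and $\langle 1, h_1, h_2, h_2 h_1\rangle$, which are distinct because the regularity of $\QQ$ forces $h_1 h_2 \ne h_2 h_1$.
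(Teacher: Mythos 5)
Your outline points in the right direction and is consistent with the paper's own strategy: the four points of $C \cap \NC$ in the plane of the poses are indeed the data from which the null quadrilateral (and hence the 2R space) must be reconstructed, and the two solutions do, as the paper's remark confirms, differ only by the ordering of the axes. However, the decisive step --- that there are \emph{exactly two} solutions --- is not actually proven in your proposal. The parameter count is only a heuristic (and at best yields ``finitely many expected''), and the appeal to ``the inverse of the diagonal construction of \autoref{lem:3}'' is circular: the planar quadrilateral in the tangent plane of $\QQ$ at $[1]$, from which the unordered pair of axes is read off, is \emph{cut out by the null quadrilateral}, which is precisely the object whose existence and two-fold uniqueness you are trying to establish. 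Note also that the four points of $C \cap \NC$ lie in the plane $[1]\vee[p_1]\vee[p_2]$, not in the tangent hyperplane of $\SQ$ at $[1]$, so they cannot be fed directly into the \autoref{lem:3} construction; in the paper they play a different role, namely as the points through which the four \emph{sides} of the sought null quadrilateral must pass.

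The paper closes this gap constructively: the primal part $u'_1$ of one vertex is subject to one quadratic and two linear conditions, hence admits exactly two projective solutions; once one is chosen, the primal parts of the remaining vertices follow linearly, and \autoref{lem:5} guarantees a \emph{unique} lift from the primal projections back to the actual vertices on $\SQ\cap\NC$. Existence and the exact count of two thus come out of one computation, which is what your argument is missing. A secondary point: $h_1h_2 \neq h_2h_1$ alone does not make $[1]\vee[h_1]\vee[h_2]\vee[h_1h_2]$ and $[1]\vee[h_1]\vee[h_2]\vee[h_2h_1]$ distinct; one should also use that $h_1h_2 + h_2h_1 = h_1h_2 + \cj{h_1h_2} \in \D$, so coincidence of the two spans would force either $[\eps]\in P$ (excluded by $P\cap\EG=\varnothing$) or intersecting/parallel axes (excluded by regularity of $\QQ$).
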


\begin{proof}
  By \autoref{th:1}, the sought 2R space $P$ contains a null
  quadrilateral. We compute its vertices $[u_1]$, $[v_1]$, $[u_2]$,
  $[v_2]$ as follows. The plane $[p_0] \vee [p_1] \vee [p_2]$ and the
  Study quadric $\SQ$ have a regular conic $C$ in common. This conic
  intersects the null cone in two pairs $m_1$, $m_2$, and $n_1$, $n_2$
  of conjugate complex points that can be computed from the roots of a
  univariate quartic polynomial. Again, we denote projection on the
  primal part by a prime. We can determine the primal part $u'_1$ of
  $u_1$ by solving the system
  $u'_1\cj{u'_1} = u'_1\cj{m'_1} + m'_1\cj{u'_1} = u'_1\cj{n'_2} +
  u'_1\cj{n'_2} =0$.
  It consists of two linear and one quadratic equation. Hence, there
  are precisely two solutions in projective sense. Picking one of
  them, the primal parts of the remaining vertices can be computed
  unambiguously: Write, for example, $v'_1 = u'_1 + \lambda m'_1$ and
  solve $v'_1\cj{n'_1} + n'_1\cj{v'_1} = 0$ for $\lambda$. Finally,
  from $[u'_1]$, $[v'_1]$, $[u'_2]$, $[v'_2]$ the points $[u_1]$,
  $[v_1]$, $[u_2]$, $[v_2]$ can be uniquely recovered as in the proof
  of \autoref{lem:5}.
\end{proof}

\begin{remark}
  The two solutions in \autoref{th:2} both contain the conic $C$. This
  conic is the kinematic image of the coupler motion of a Bennett
  linkage \citep{hamann11}. Thus, as already mentioned by
  \cite{suh69}, the two 2R dyads of \autoref{th:2} can be combined to
  form a Bennett linkage. Moreover, the computation of \autoref{lem:3}
  shows that the two 2R spaces are
  $[1] \vee [h_1] \vee [h_2] \vee [h_1h_2]$ and
  $[1] \vee [h_1] \vee [h_2] \vee [h_2h_1]$ with suitable rotation
  quaternions $h_1$, $h_2$. In other words, they differ only by the
  \emph{ordering of their axes.} This shows that the figures formed by
  opposite axes in a Bennett linkage are congruent.
\end{remark}

In a 5R linkage there are two possibilities to assign a coupler to a
fixed base. The two choices can be distinguished by the degree of
their relative motion with respect to the base. As has been shown
recently by \cite[Theorem~6]{hegedus13:_bonds2}, the degrees of
relative motions between non-adjacent links in the dual quaternion
model of rigid body kinematics are two and three, respectively. In
order to gain additional degrees of freedoms over the synthesis of
Bennett linkages, it is mandatory to use the link to the degree three
motion as coupler. In this sense, we can say that the coupler motion
of a 5R linkage admits a rational cubic parameterisation (a cubic
motion polynomial) in the dual quaternion model of~$\SE$. However, not
every twisted cubic in the Study quadric gives rise to a 5R
linkage. The necessary and sufficient condition is that the coupler
motion can also be generated by a 2R dyad, that is, the twisted cubic
lies in a 2R space.

Using the factorisation theory for rational motions
\citep{hegedus13:_factorization2}, a generic cubic motion polynomial
$C$ can be written, in six different ways, as
$C = (t-h_1)(t-h_2)(t-h_3)$ with linear motion polynomials $t-h_i$,
$i \in \{1,2,3\}$, that parameterise rotations about fixed axes.
Suitable combinations of such factorisations give an overconstrained
6R linkage whose coupler follows the prescribed cubic motion. For
details on how to compute cubic interpolants on quadrics, how to
factor the resulting motions and how to pick factorisations suitable
for linkage synthesis we refer to
\cite{hegedus14:_four_pose_synthesis}. We just highlight the major
differences to the case of 5R linkage synthesis.

By a fundamental property of the underlying theory, the factorisations
of a cubic motion polynomial are, at least in generic situations, in
bijection with the permutations of the real quadratic factors of
$C\cj{C}$ \citep{hegedus13:_factorization2}. Pairings for
``admissible'' kinematic chains correspond to permutations with
different factors at the beginning or at the end. In the case of 5R
synthesis, the situation is special. At least one factorisation is of
the shape $C = (t-h_1)(t-h_2)(t-h_3)$ where either the axes of $h_1$
and $h_2$ or of $h_2$ and $h_3$ are identical. Lets assume the first
case. Then $h_1$ and $h_2$ commute so that $C = (t-h_2)(t-h_1)(t-h_3)$
is another factorisation, different in algebraic sense but identical
in kinematic sense. It can be paired with \emph{just two} of the
remaining four factorisations to form a closed 5R linkage. If $h_2$
and $h_3$ have identical axis, above discussion can be repeated with
$\cj{C} = (t-\cj{h_3})(t-\cj{h_2})(t-\cj{h_1})$ instead of $C$.

For a more detailed discussion of some properties of 5R linkages in
this context, we need some results of twisted cubics that can be found
in classical texts \citep[for example][]{salmon82,cayley85}. The
intersection of a 2R space with the Study quadric is a regular ruled
quadric $\QQ$. It carries two families of rulings. One family contains
a ruling corresponding, via Study's kinematic mapping, to all
rotations about the axis at the base of the 2R dyad, the other family
contains a ruling corresponding to the moving revolute axes. We may
accordingly speak of a ``first'' and a ``second'' family of
rulings. The twisted cubics on $\QQ$ can be partitioned into two
classes: Members of the first class intersect every ruling of the
first family in two and every ruling of the second family in just one
point. For cubics of second class, the situation is just the other way
round. Finally, four points on $\QQ$ determine two one-parametric
families of twisted cubics, one of first class and one of second. A
member of each family is uniquely determined by the choice of one
further point on a ruling through one of the four given points.

These considerations clearly show that four poses in general position
cannot be reached by a 5R linkage unless their kinematic images span a
2R space. On the other hand, three poses in general position already
determine, by \autoref{th:2}, two 2R spaces but also a Bennett
linkage. By above discussion, we have three degrees of freedom to find
a cubic curve through three quadric points. In other words, the number
of free degrees in 5R linkage synthesis exceed that of 4R linkages by
three. Unfortunately, only restricted use can be made of these. It is,
for example, not possible to arbitrarily prescribe one further
orientation: While there is a dual quaternion with prescribed primal
part (orientation) in each 2R space, it will, in general, not lie on
the Study quadric. An additional prescribed position is prevented for
similar reasons.

Finally, also the appearance of two types of factorisations with
commuting factors on the left ($h_1$, $h_2$) or on the right ($h_2$,
$h_3$) can be explained by the two families of twisted cubics on
$\QQ$. If the commuting factors are on the left,
$C_0(t) = (t - h_1)(t-h_2)$ is a quadratic parameterisation of the
line $[1] \vee [h_1]$ and every point on that line corresponds to two
parameter values. In particular, there exist a second parameter value
$t_0$ (besides $\infty$) such that $[C_0(t_0)] = [1]$. But then
$C(t_0)$ yields a second curve point (besides $[1]$) on
$[1] \vee [h_3]$, that is, the cubic belongs to the second family. In
similar manner we see that the cubic belongs to the first family, if
$h_2$ and $h_3$ commute.

\section{Conclusion}
\label{sec:conclusion}

We characterised the kinematic image of 2R dyads and used it to
specialise a recently developed synthesis procedure for 6R linkages in
such a way that it yields 5R linkages. Even if sufficient degrees of
freedom are available, it is not possible in general to find a 5R
linkage that visits three and a half poses (three poses plus one
position or one orientation) because of geometric obstructions. Thus,
a natural selection of interpolation data for the envisaged coupler
motion is not entirely clear. Hopefully, future research will provide
us with more ideas in this direction.

\appendix
\section{Auxiliary results}
\label{sec:appendix}

Here, we prove two technical results. The formulation of
\autoref{lem:4} could be simplified but in its present form its
application in the proof of \autoref{lem:5} is apparent.

\begin{lemma}
  \label{lem:4}
  Given a three-space $E \subset P^7$ and four points $[u'_1]$,
  $[v'_1]$, $[u'_2]$, $[v'_2]$ that span a three-space $F \subset P^7$
  with $E \cap F = \varnothing$, set $U_1 \coloneqq [u'_1] \vee E$,
  $V_1 \coloneqq [v'_1] \vee E$, $U_2 \coloneqq [u'_2] \vee E$,
  $V_2 \coloneqq [v'_2] \vee E$ and consider four projections
  \begin{equation*}
    \mu_1\colon U_1 \to V_1,\quad
    \nu_1\colon V_1 \to U_2,\quad
    \mu_2\colon U_2 \to V_2,\quad
    \nu_2\colon V_2 \to U_1
  \end{equation*}
  with respective centres $[m_1]$, $[n_1]$, $[m_2]$, and $[n_2]$. If
  the centres span a plane $L$, the composition
  $\kappa\coloneqq\nu_2 \circ \mu_2 \circ \nu_1 \circ \mu_1$ of the
  four projections is the identity.
\end{lemma}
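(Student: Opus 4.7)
The plan is to verify that $\kappa$ acts as the identity on $U_1$ by splitting $U_1$ into the hyperplane $E$ and its affine complement, handling each piece separately. The first step is the observation that each of the four projections fixes $E$ pointwise: since $E$ is contained in the target of every projection, and the corresponding centre is not (otherwise the projection would not be a bijection between the two four-spaces), the line joining any centre to a point $e\in E$ meets the target only in $e$ itself. Consequently $\kappa|_E$ is automatically the identity.

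For the complement I would introduce explicit coordinates. Because $E\cap F=\varnothing$, a basis of $E$ together with the representatives $u'_1, v'_1, u'_2, v'_2$ is a basis of the ambient eight-dimensional linear space, and every point of $U_1\setminus E$ has a unique representative of the form $u'_1+e_0$ with $e_0\in E$; similarly for $V_1$, $U_2$, $V_2$. The centre $[m_1]$ must lie in $(U_1\vee V_1)\setminus(U_1\cup V_1)$, so after rescaling we may write $m_1=u'_1+v'_1+a$ with $a\in E$, and analogously $n_1=v'_1+u'_2+b$, $m_2=u'_2+v'_2+c$, $n_2=v'_2+u'_1+d$. Matching the coefficients of $u'_1, v'_1, u'_2, v'_2$ in an arbitrary linear dependence among $m_1, n_1, m_2, n_2$ then forces the coefficient vector to be proportional to $(1,-1,1,-1)$, so the coplanarity of the four centres is equivalent to the single $E$-valued identity
\begin{equation*}
  a-b+c-d=0.
\end{equation*}

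The composition is then a direct calculation. For $p=u'_1+e_0$, the unique point of $([m_1]\vee[p])\cap V_1$ is obtained by killing the $u'_1$-coefficient, yielding $\mu_1([u'_1+e_0]) = [v'_1+a-e_0]$, with analogous formulas for $\nu_1, \mu_2, \nu_2$. Chaining the four maps gives
\begin{equation*}
  \kappa([u'_1+e_0])=[u'_1+(d-c+b-a)+e_0],
\end{equation*}
which collapses to $[u'_1+e_0]$ precisely under the coplanarity relation above, completing the verification on the complement and hence on all of $U_1$. The main obstacle is purely organisational: one has to choose the centre representatives so that the condition defining $L$ becomes a transparent linear identity in $E$; with the symmetric normalisation above, everything else reduces to substitution.
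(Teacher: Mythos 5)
Your overall strategy is sound and the final computation is correct, but one step does not hold as stated: the simultaneous normalisation of all four centres. You can certainly arrange $m_1=u'_1+v'_1+a$ (rescale $v'_1$), then $n_1=v'_1+u'_2+b$ (rescale $n_1$ and $u'_2$) and $m_2=u'_2+v'_2+c$ (rescale $m_2$ and $v'_2$); but at that point the representatives $u'_1$ and $v'_2$ are both already pinned down, and rescaling $n_2$ alone multiplies its two $F$-coefficients by the same factor. So in general you only obtain $n_2=\rho u'_1+v'_2+d$ with some $\rho\neq 0$, and $\rho$ is a multiplicative invariant of the configuration that cannot be normalised away. It equals $1$ exactly when the centres are coplanar, so your normalisation silently consumes the very hypothesis it is supposed to feed into the computation. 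The repair is easy: keep the extra parameter and note that matching the $u'_1$-, $v'_1$-, $u'_2$-, $v'_2$-coefficients of a nontrivial dependence $\lambda_1 m_1+\lambda_2 n_1+\lambda_3 m_2+\lambda_4 n_2=0$ yields $\lambda_1+\rho\lambda_4=\lambda_1+\lambda_2=\lambda_2+\lambda_3=\lambda_3+\lambda_4=0$, hence $(\lambda_1,\lambda_2,\lambda_3,\lambda_4)\propto(1,-1,1,-1)$ \emph{and} $\rho=1$; the $E$-component then gives $a-b+c-d=0$. With that correction everything else is fine: $E$ is indeed fixed pointwise by each projection, the formula $\mu_1([u'_1+e_0])=[v'_1+a-e_0]$ and its analogues are correct, and the telescoping $\kappa([u'_1+e_0])=[u'_1+(d-c+b-a)+e_0]$ gives the identity precisely under the coplanarity relation.

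For comparison, the paper also argues by a direct computation in adapted coordinates, but it adapts them to a single orbit $[u_1]$, $[v_1]=\mu_1([u_1])$, \dots{} of the composition, taking these as base points together with a unit point whose projection from $E$ is the intersection of the diagonals of the quadrilateral of centres; each projection then becomes a sign change of one coordinate, and coplanarity enters through the existence of that diagonal point. Your decomposition along $E\oplus F$ is a legitimate alternative that makes the role of the single relation $a-b+c-d=0$ more transparent, once the normalisation issue above is addressed.
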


\begin{proof}
  Note that $\dim U_1 \vee V_1 = 5$ so that the projection $\mu_1$
  (and also $\nu_1$, $\mu_2$, $\nu_2$) from a suitable point is well
  defined. Take an arbitrary point $[u_1] \in U_1$ but not in $E$ and
  not in $L$ and set $[v_1] \coloneqq \mu_1([u_1])$,
  $[u_2] \coloneqq \nu_1([v_2])$, $[v_2] \coloneqq \mu_2([u_2])$.
  Because these points are not coplanar, we may take them, in above
  order, as base points of a projective coordinate system in
  $G \coloneqq [u_1] \vee [v_1] \vee [u_2] \vee [v_2]$ and complement
  them by four further base points in $E$ and a suitable unit point to
  a projective coordinate system of $P^7$. We select the unit point
  such that its projection into $G$ from $E$ gives
  $([m_1] \vee [m_2]) \cap ([n_1] \vee [n_2])$ and denote induced
  projective coordinates in $U_1$, $V_1$, $U_2$, $V_2$, and $G$
  (obtained by dropping three or four zero coordinates) by writing the
  subspace as subscript. With this convention, the projection centres
  are $[m_1] = [1,1,0,0]_G$, $[n_1] = [0,1,1,0]_G$,
  $[m_1] = [0,0,1,1]_G$, $[n_2] = [1,0,0,1]_G$.  Moreover, we denote
  the coordinate vector of an arbitrary point $x \in U_1$ by
  $[x_0,\star]_{U_1}$ where the star abbreviates four fixed
  coordinates. We then have
  \begin{equation*}
    [x_0,\star]_{U_1} \stackrel{\mu_1}{\mapsto}
    [-x_0,\star]_{V_1} \stackrel{\nu_1}{\mapsto}
    [x_0,\star]_{U_2} \stackrel{\mu_2}{\mapsto}
    [-x_0,\star]_{V_2} \stackrel{\nu_2}{\mapsto}
    [x_0,\star]_{U_1}
  \end{equation*}
  and the claim follows.
\end{proof}

\begin{lemma}
  \label{lem:5}
  Let $E$, $F \subset P^7$ be non-intersecting three spaces, the
  latter spanned by points $[u'_1]$, $[v'_1]$, $[u'_2]$,
  $[v'_2]$. Consider further a regular quadric $\QQ$ containing $E$
  and four vertices $[m_1]$, $[n_1]$, $[m_2]$, $[n_2] \in \QQ$ of a
  planar quadrilateral. Then there exists a unique spatial
  quadrilateral with vertices $[u_1]$, $[v_1]$, $[u_2]$, $[v_2]$ that
  is contained in $Q$, whose sides $[u_1] \vee [v_1]$,
  $[v_1] \vee [u_2]$, $[u_2] \vee [v_2]$, $[v_2] \vee [u_1]$ are, in
  that order, incident with $[m_1]$, $[n_1]$, $[m_2]$, $[n_2]$ and
  whose projections from $E$ into $F$ are $[u'_1]$, $[v'_1]$,
  $[u'_2]$, and~$[v'_2]$.
\end{lemma}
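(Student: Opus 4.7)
The plan is to combine \autoref{lem:4} with the ruling structure of $\mathcal{Q}$ in order to reduce the problem to three linear conditions on a three-dimensional projective space, which in general position meet in a unique point.

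The starting observation is that, since $\mathcal{Q}$ is a regular quadric in $P^7$ containing the three-space $E$ as a maximal isotropic subspace, the intersection of $\mathcal{Q}$ with the four-space $U_1 = [u'_1] \vee E$ is a reducible (rank two) quadric that splits as $E \cup U_1^*$, where $U_1^*$ is a second three-space on $\mathcal{Q}$. The same argument gives three-spaces $V_1^*$, $U_2^*$, $V_2^*$. Any vertex $[u_1]$ of the sought quadrilateral projects to $[u'_1]$, lies on $\mathcal{Q}$, and generically avoids $E$, hence lies on $U_1^*$; analogously for $[v_1], [u_2], [v_2]$.

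For any candidate $[u_1] \in U_1^*$, set $[v_1] \coloneqq \mu_1([u_1])$, $[u_2] \coloneqq \nu_1([v_1])$, $[v_2] \coloneqq \mu_2([u_2])$. By \autoref{lem:4}, the chain closes up, so the four lines form a spatial quadrilateral whose sides pass through the prescribed centres; the projection conditions are automatic because $[u_i] \in U_i$ and $[v_i] \in V_i$. It remains to enforce that the four sides lie on $\mathcal{Q}$, which amounts to a conjugacy condition on the endpoints of each side. The crucial observation is that only three of these conditions are independent: once sides one, two, and three lie on $\mathcal{Q}$, the point $[v_2]$ is on $\mathcal{Q}$, and then the fourth side $[v_2] \vee [u_1]$ contains the three $\mathcal{Q}$-points $[v_2]$, $[n_2]$, $[u_1]$, which forces the line onto $\mathcal{Q}$.

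Thus $[u_1]$ must satisfy exactly three linear conditions on $U_1^* \cong P^3$: $[u_1]$ conjugate to $[m_1]$, $\mu_1([u_1])$ conjugate to $[n_1]$, and $\nu_1\mu_1([u_1])$ conjugate to $[m_2]$. Three hyperplanes in $P^3$ in general position meet in a single point, which uniquely determines $[u_1]$ and hence the entire spatial quadrilateral, establishing both existence and uniqueness. The main technical step is translating each conjugacy condition into the corresponding hyperplane in $U_1^*$ via the linear maps $\mu_1$ and $\nu_1\mu_1$; once this is done, the ``three collinear $\mathcal{Q}$-points force the line onto $\mathcal{Q}$'' observation for the fourth side does the conceptual heavy lifting.
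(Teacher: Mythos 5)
Your reduction follows the same strategy as the paper's own proof: invoke \autoref{lem:4} to close the chain of projections, so that the incidences with $[m_1]$, $[n_1]$, $[m_2]$, $[n_2]$ and the projection conditions hold by construction, and then observe that the remaining quadric conditions are linear in the one free vertex. Your packaging of those linear conditions is in fact more geometric than the paper's: restricting $[u_1]$ to the second three-space $U_1^*$ in $\QQ\cap U_1 = E\cup U_1^*$ is legitimate (since $E$ is totally isotropic for the regular $\QQ$ we have $E^\perp=E$, and $[u'_1]\notin E$ guarantees $U_1^*\neq E$), and discarding the fourth side via ``a line with three points on a quadric lies on the quadric'' is correct and replaces one of the paper's equations by a purely synthetic observation.

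The gap is in your final step, ``three hyperplanes in $P^3$ in general position meet in a single point.'' Existence follows unconditionally (three hyperplanes in $P^3$ always have a common point), but the lemma asserts \emph{uniqueness}, and uniqueness is precisely what is used in the proof of \autoref{lem:3}. You verify neither that each conjugacy condition cuts $U_1^*$ in a genuine hyperplane (the condition $\qf(u_1,m_1)=0$ is vacuous on $U_1^*$ exactly when $[m_1]\in U_1^{*\perp}=U_1^*$), nor that the three hyperplanes are independent; as written, ``general position'' is an extra hypothesis rather than a consequence of the assumptions. The paper closes exactly this hole: in a basis adapted to $F$ and $E$ the Gram matrix of $\qf$ has vanishing lower-right $4\times4$ block because $E\subset\QQ$ is totally isotropic, so regularity of $\QQ$ forces the off-diagonal block $B$ to be regular, and the linear system for the unknown $E$-coordinates is row-equivalent to $B$. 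To complete your argument you would need to identify your three hyperplanes (together with the linear condition defining $U_1^*$ inside $U_1$) with the rows of $B$, up to nonzero scalars coming from the projections, so that their independence follows from the regularity of $\QQ$ and $E\cap F=\varnothing$ rather than from an unproved genericity assumption.
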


\begin{proof}
  Denote a quadratic form associated to $\QQ$ by $\qf$.  Given
  $[u'_1]$, $[v'_1]$, $[u'_2]$, $[v'_2]$, we have to reconstruct
  $[u_1]$, $[v_1]$, $[u_2]$, $[v_2]$ subject to the constraints
  \begin{equation}
    \label{eq:4}
    \qf(u_i,u_i) = \qf(v_i,v_i) = \qf(u_i,v_j) = 0, \quad i,j \in \{1,2\}
  \end{equation}
  and
  \begin{equation}
    \label{eq:5}
    [m_1] \in [u_1] \vee [v_1],\quad
    [n_1] \in [v_1] \vee [u_2],\quad
    [m_2] \in [u_2] \vee [v_2],\quad
    [n_2] \in [v_2] \vee [u_1].
  \end{equation}
  If $[u_1]$ is given, we find $[v_1]$ by projecting $[u_1]$ from
  centre $[m_1]$ onto $E \vee [v'_1]$. \autoref{lem:4} tells us that
  we can find $[u_2]$ and $[v_2]$ in similar manner such that
  \eqref{eq:5} is satisfied. Then, some of the conditions in
  \eqref{eq:4} become redundant and it is sufficient to consider only
  \begin{equation}
    \label{eq:6}
    \qf(u_1,u_1) = \qf(v_1,v_1) = \qf(u_2,u_2) = \qf(v_2,v_2) = 0.
  \end{equation}
  In a projective coordinate system with base points $[u'_1]$,
  $[v'_1]$, $[u'_2]$, $[v'_2] \in F$ and further base points in $E$
  the quadratic form $\qf$ (and the quadric $\QQ$) is
  described by a matrix of the shape
  \begin{equation*}
    \begin{bmatrix}
      A & B \\
      B & O
    \end{bmatrix}
  \end{equation*}
  where $A$, $B$ and $O$ are matrices of dimension $4 \times 4$, $O$
  is the zero matrix and $B$ is regular. Now \eqref{eq:6} gives rise
  to a linear system for the unknown coordinates of $[u_1]$, $[v_1]$,
  $[u_2]$, and $[v_2]$. The matrix of the linear system is equivalent
  to the matrix $B$ by elementary row transformations, whence
  existence and uniqueness of a solution hinges solely on the
  regularity of~$B$.
\end{proof}

\section*{Acknowledgement}
\label{sec:acknowledgment}

This work was supported by the Austrian Science Fund (FWF): P\;26607
(Algebraic Methods in Kinematics: Motion Factorisation and Bond
Theory).

\bibliographystyle{wow}

\begin{thebibliography}{10}
\providecommand{\natexlab}[1]{#1}
\providecommand{\url}[1]{\texttt{#1}}
\providecommand{\urlprefix}{URL }
\providecommand{\eprint}[2][]{\url{#2}}

\bibitem[{Brunnthaler et~al.(2005)Brunnthaler, Schröcker \&
  Husty}]{brunnthaler05:_bennet_synthesis}
\textsc{Brunnthaler, K., Schröcker, H.P. \& Husty, M.}, 2005 A new method for
  the synthesis of {Bennett} mechanisms. In \emph{Proceedings of CK 2005,
  International Workshop on Computational Kinematics}, Cassino.

\bibitem[{Cayley(1885)}]{cayley85}
\textsc{Cayley, A.}, 1885 On the twisted cubics upon a quadric surface.
  \emph{Messanger of Mathematics} \textbf{14}, 129--132.

\bibitem[{Hamann(2011)}]{hamann11}
\textsc{Hamann, M.}, 2011 Line-symmetric motions with respect to reguli.
  \emph{Mech. Mach. Theory} \textbf{46}(7), 960--974.

\bibitem[{Hegedüs et~al.(2013{\natexlab{a}})Hegedüs, Schicho \&
  Schröcker}]{hegedus13:_factorization2}
\textsc{Hegedüs, G., Schicho, J. \& Schröcker, H.P.}, 2013{\natexlab{a}}
  Factorization of rational curves in the {Study} quadric and revolute
  linkages. \emph{Mech. Mach. Theory} \textbf{69}(1), 142--152.

\bibitem[{Hegedüs et~al.(2013{\natexlab{b}})Hegedüs, Schicho \&
  Schröcker}]{hegedus13:_bonds2}
\textsc{Hegedüs, G., Schicho, J. \& Schröcker, H.P.}, 2013{\natexlab{b}} The
  theory of bonds: A new method for the analysis of linkages. \emph{Mech. Mach.
  Theory} \textbf{70}, 407--424.

\bibitem[{Hegedüs et~al.(2014)Hegedüs, Schicho \&
  Schröcker}]{hegedus14:_four_pose_synthesis}
\textsc{Hegedüs, G., Schicho, J. \& Schröcker, H.P.}, 2014 Four-pose
  synthesis of angle-symmetric {6R} linkages. Accepted for publication in ASME
  J. Mechanisms Robotics.

\bibitem[{Klawitter(2015)}]{klawitter15}
\textsc{Klawitter, D.}, 2015 \emph{Clifford Algebras. Geometric Modelling and
  Chain Geometries with Application in Kinematics}. Springer Spektrum.

\bibitem[{Salmon(1882)}]{salmon82}
\textsc{Salmon, G.}, 1882 \emph{A treatise on the analytic geometry of three
  dimensions}. Hodges, Figgis \& Co., Dublin, 4 edn.

\bibitem[{Selig(2005)}]{selig05:_geometric_fundamentals_robotics}
\textsc{Selig, J.}, 2005 \emph{Geometric Fundamentals of Robotics}. Monographs
  in Computer Science, Springer, 2 edn.

\bibitem[{Suh(1969)}]{suh69}
\textsc{Suh, C.H.}, 1969 On the duality in the existence of {R-R} links for
  three positions. \emph{ASME J. Mechanical Design} \textbf{91}(1), 129--134.

\end{thebibliography}

\end{document}